\documentclass{article}
\usepackage{graphicx} 
\usepackage{amsfonts,amsmath,amssymb,amsthm}
\usepackage{xcolor}

\newtheorem{proposition}{Proposition}

\newtheorem{example}{Example}[section]

\def\jstar{J^*}

\def\a{\alpha}
\def\m{\mu}

\title{An Error Bound for Aggregation in Approximate Dynamic Programming\footnote{
This work was carried out at the Fulton School of Computing and Augmented Intelligence, Arizona State University, Tempe, AZ.}}
\author{Yuchao Li \  and \ Dimitri Bertsekas}
\date{May 2026}

\begin{document}

\maketitle
\begin{abstract}
We consider a general aggregation framework for discounted finite-state infinite horizon dynamic programming (DP) problems. It defines an aggregate problem whose optimal cost function can be obtained off-line by exact DP and then used as a terminal cost  approximation for an on-line reinforcement learning (RL) scheme. We derive a bound on the error between the optimal cost functions of the aggregate problem and the original problem. This bound was first derived by Tsitsiklis and van Roy \cite{tsitsiklis1996} for the special case of hard aggregation. Our bound is similar but applies far more broadly, including to soft aggregation and feature-based aggregation schemes.
\end{abstract}

\section{The Aggregation Framework}\label{sec:intro}

We will focus on the standard discounted infinite horizon Markovian decision problem with the $n$ states  
$1,\ldots,n$. States and successor states will be denoted by $i$ and $j$.
State transitions $(i,j)$ under control $u$ occur at discrete times according to transition probabilities $p_{ij}(u)$, and generate a cost $\alpha^k g(i,u,j)$ at stage $k$, where $\alpha\in (0,1)$ is the discount factor. 

We consider deterministic stationary policies $\m$ such that for each $i$, $\m(i)$ is a control that belongs to a finite constraint set $U(i)$. We denote by $J_\m(i)$ the total discounted expected cost  of $\m$ over an infinite number of stages starting from state $i$, by $\jstar(i)$ the minimal value of $J_\m(i)$ over all $\m$, and by $J_\m$ and $\jstar $ the $n$-dimensional vectors that have components $J_\m(i)$ and $\jstar(i)$, $i=1,\ldots,n$, respectively.

We consider the general aggregation framework first described in the 2012 DP textbook by the second author \cite{bertsekas2012dynamic} (see Fig.~\ref{fig:aggregation}), following earlier, more specialized, frameworks. In particular, we introduce a finite subset ${\cal A}$ of aggregate states, which we denote  by symbols such as $x$ and $y$, together with two types of probability distributions as follows:
\begin{itemize}
\item[(a)] For each aggregate state $x\in {\cal A}$, a probability distribution over $\{1,\ldots,n\}$, denoted by 
$$\{d_{xi}\,|\, i=1,\ldots,n\},$$ and referred to as the {\it disaggregation probabilities of $x$\/}.	
\item[(b)]For each original system state $j\in \{1,\ldots,n\}$, a probability distribution over ${\cal A}$, denoted by 
$$\{\phi_{jy}\,|\, y\in {\cal A}\},$$
and referred to as the {\it aggregation probabilities of $j$\/}. 
\end{itemize}

The aggregation and disaggregation probabilities  may be viewed as the parameters of our aggregation architecture. Together with the set of aggregate states ${\cal A}$, they specify a DP problem, called the {\emph{aggregate problem}. In this DP problem, the corresponding dynamic system involves two copies of the original state space $\{1,\ldots,n\}$ as well as the aggregate states, with transitions and associated costs defined as shown in Fig.~\ref{fig:aggregation}. 
In particular, a single transition in the aggregate problem, starting at an aggregate state $x$, involves three stages and ends up at another aggregate state $y$ as follows:
\begin{itemize}
\item[(i)]From aggregate state $x$, we generate a cost-free transition to an original system state $i$ according to  $d_{xi}$.
\item[(ii)]We generate a transition between original system states $i$ and $j$ according to $p_{ij}(u)$, with cost $g(i,u,j)$. 	
\item[(iii)]From original system state $j$, we generate a cost-free  transition to an aggregate
state $y$ according to  $\phi_{jy}$.
\end{itemize}

\begin{figure}
    \centering
    \includegraphics[width=0.95\linewidth]{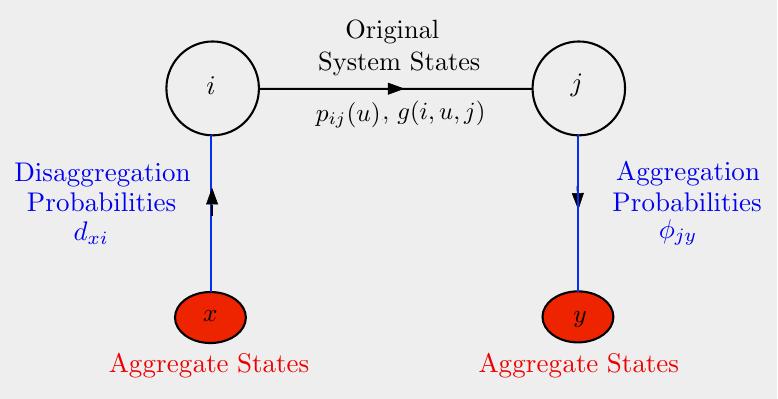}
    \caption{Illustration of our aggregation framework. It describes the aggregate problem, and the corresponding transition mechanism and costs per stage. The state space of the aggregate problem is ${\cal A}\times\{1,\ldots,n\}\times \{1,\ldots,n\}$, with transitions as shown.}
    \label{fig:aggregation}
\end{figure}

 We introduce the optimal cost vectors of the aggregate problem, $\tilde J_0=\{J_0(i)\,|\, i= 1,\ldots,n\}$, $\tilde J_1=\{J_1(j)\,|\, j=1,\ldots,n\}$, and $r^*=\{r^*_x\,|\, x\in{\cal A}\}$, which give the infinite horizon optimal cost starting from each of the states of the three portions of the state space:
\begin{itemize}
    \item[] $r^*_x$ is the optimal cost-to-go from aggregate state $x$.
    \item[] $\tilde J_0(i)$ is the optimal cost-to-go from original system state $i$ that has just been generated from an  aggregate state (left side of Fig.~\ref{fig:aggregation}). 
    \item[] $\tilde J_1(j)$ is the optimal cost-to-go from original system state $j$ that has just been generated from an original system state (right side of Fig.~\ref{fig:aggregation}).
\end{itemize}
Note that because of the intermediate transitions to aggregate states, the vectors $\tilde J_0$ and $\tilde J_1$ are different.

These three vectors satisfy the following three Bellman equations:
\begin{gather}
    r^*_x=\sum_{i=1}^nd_{x i}\tilde J_0(i),\qquad x\in{\cal A},\label{eq:belman_1}\\
\tilde J_0(i)=\min_{u\in U(i)}\sum_{j=1}^n p_{ij}(u)\big(g(i,u,j)+\a \tilde J_1(j)\big),\qquad i=1,\ldots,n,\label{eq:belman_2}\\
\tilde J_1(j)=\sum_{y\in{\cal A}}\phi_{jy}r^*_y,\qquad j=1,\ldots,n.\label{eq:belman_3}
\end{gather}
Our objective is to solve for the optimal cost vector $r^*$ of the aggregate states in order to obtain approximations to the optimal costs $J^*(i)$ for the original problem through the interpolation formula
\begin{equation}
    \label{eq:j_tilde}
    \tilde J(j)=\sum_{y\in{\cal A}}\phi_{jy}r^*_y,\qquad j=1,\ldots,n.
\end{equation}
There are several DP methods to obtain $r^*$, including some that are simulation-based; see \cite{bertsekas2012dynamic}, Section 6.5, for an overview.

The preceding aggregation framework is described in detail in the book \cite{bertsekas2012dynamic}, and generalizes earlier aggregation frameworks from several works; see Singh, Jaakkola,
and Jordan \cite{singh1994reinforcement}, Gordon \cite{gordon1995stable}, and Tsitsiklis and Van Roy \cite{tsitsiklis1996}, \cite{van1995}, and the neuro-dynamic programming book \cite{bertsekas1996neuro}, Sections 3.1.2 and 6.7. 
There are several special cases that have received attention in the literature:

\begin{itemize}
\item[(a)] \emph{Hard aggregation}: Here all aggregation probabilities $\phi_{jy}$ are either 0 or 1. It  follows that the sets 
$$S_y=\{j\,|\, \phi_{jy}=1\},\qquad y\in{\cal A},$$
called the \emph{footprints} of the aggregate states $y$, form a partition of the original state space $\{1,\ldots,n\}$. Moreover, based on  Eq.\ \eqref{eq:j_tilde}, the cost approximation $\tilde J$ is piecewise constant: it is constant over each footprint set $S_y$, with value $r^*_y$.
\item[(b)] \emph{Soft aggregation}: This is an extension of hard aggregation, where there is a ``soft" boundary between the sets of the state space partition, i.e., the footprint sets overlap partially. The aggregation probabilities are chosen to be positive for the states of overlap, so that the cost approximation $\tilde J$ is piecewise constant, except along the states of footprint overlap, where $\tilde J$ changes ``smoothly." 
\item[(c)] \emph{Aggregation with representative states}: This is a common discretization or ``coarse grid" scheme, whereby  we choose a subset of “representative” original system states, and we
associate each one of them with an aggregate state. In particular, each aggregate state
$x$ is associated with a unique representative state $i_x$, and the disaggregation
probabilities are
$d_{x i} =1$ if $i= i_x$ and 0 otherwise. 
\item[(d)] \emph{Aggregation with representative features}: Here the aggregate states are characterized by nonempty subsets of original system states, which, however, may not form a  partition of the original state space. In an important
example of this scheme, we choose a collection of distinct “representative”
feature vectors, and we associate each one of them with an aggregate state
consisting of the subset of original system states that share the corresponding
feature value (see \cite{bertsekas2012dynamic}, Section 6.5, or \cite{bertsekas2019reinforcement}, Section 6.2). The paper \cite{bertsekas2018feature} provides an overview of feature-based aggregation, and discusses ways to combine the methodology with the use of deep neural networks.
\end{itemize}

An important question is to estimate the approximation error 
$$\max_{i=1,\ldots,n}|J^*(i)-\tilde J(i)|.$$ 
This question has been addressed by Tsitsiklis and van Roy \cite{tsitsiklis1996} for the special case of hard aggregation, under the  condition that for any pair $(x,i)$, $d_{xi}>0$ implies $\phi_{ix}=1$, i.e., if each aggregate state $x$ disaggregates exclusively within the corresponding footprint set $S_x=\{j\,|\, \phi_{jx}=1\}$. 

The purpose of this paper is to provide an extension of this bound, which holds beyond the case of hard aggregation, subject  to the condition
\begin{equation}
    \label{eq:critical_condition}
    d_{xi}>0\quad \hbox{implies} \quad \phi_{ix}>0,\qquad \hbox{for all }x\in \mathcal{A}\hbox{ and }i=1,\dots,n.
\end{equation}
This condition is natural in all the aggregation schemes described earlier, including soft aggregation, and we will show by example that the condition is essential for any kind of meaningful bound on $\max_{i=1,\ldots,n}|J^*(i)-\tilde J(i)|$ to hold.

\section{The Error Bound}\label{sec:bound}

By combining the three Bellman equations \eqref{eq:belman_1}-\eqref{eq:belman_3}, we see that $r^*$ satisfies
\begin{equation}
    \label{eq:bellman_comp}
    r^*_x=\sum_{i=1}^nd_{x i}\min_{u\in U(i)}\sum_{j=1}^n p_{ij}(u)\Bigg(g(i,u,j)+\a \sum_{y\in{\cal A}}\phi_{jy}\,r^*_{y}\Bigg),\qquad x\in{\cal A},
\end{equation}
or equivalently $r^*=Hr^*$, where  $H$ is the operator that maps the vector $r$ to the vector $Hr$ with components
\begin{equation}
    \label{eq:h_op}
    (Hr)(x)=\sum_{i=1}^nd_{xi}\min_{u\in U(i)}\sum_{j=1}^n p_{ij}(u)\Bigg(g(i,u,j)+\a \sum_{y\in{\cal A}}\phi_{jy}\,r_{y}\Bigg),\quad x\in{\cal A}.
\end{equation}

It can be seen that $H$ is monotone, in the sense that 
\begin{equation}
    \label{eq:monotone}
    Hr\ge Hr'\qquad \hbox{for all }r,r'\hbox{ such that } r\ge r'.
    \end{equation}
Moreover, it can be shown that \emph{$H$ is a contraction mapping with respect to the maximum norm}, and thus the composite Bellman equation \eqref{eq:bellman_comp} has $r^*$ as its unique solution; see \cite[Section~6.5]{bertsekas2012dynamic} or \cite[Section~6.2]{bertsekas2019reinforcement}. 

\begin{proposition}[Error Bound on Aggregation]
Let 
the condition \eqref{eq:critical_condition} hold.
Then we have 
$$|J^*(i)-\tilde J(i)|\leq \frac{\epsilon}{1-\alpha},\qquad i=1,2,\dots,n,$$
where 
$$\tilde J(i)=\sum_{x\in \mathcal{A}}\phi_{ix}r^*_x,$$
[cf.\ Eq.~\eqref{eq:j_tilde}], and 
$$\epsilon=\max_{x\in \mathcal{A}}\max_{\{i,j\,|\,\phi_{ix}>0,\,\phi_{jx}>0\}}|J^*(i)-J^*(j)|.$$
\end{proposition}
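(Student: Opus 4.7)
\medskip
\noindent\textbf{Proof proposal.}
The plan is to introduce a proxy vector $r \in \mathbb{R}^{|\mathcal{A}|}$ defined by
\[
r_x = \sum_{i=1}^n d_{xi}\, J^*(i), \qquad x \in \mathcal{A},
\]
namely the disaggregation-weighted average of the true optimal cost, and to control $\|r - r^*\|_\infty$ via the standard contraction estimate $\|r - r^*\|_\infty \le (1-\alpha)^{-1}\|Hr - r\|_\infty$ that follows from the $\alpha$-contraction of $H$ stated in the text. The final bound on $|J^*(i) - \tilde J(i)|$ will then be assembled through a triangle inequality splitting the error through the auxiliary interpolant $\sum_y \phi_{iy} r_y$.

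The key step, and the only place where the condition \eqref{eq:critical_condition} enters, is a uniform bound
\[
\Bigl|\sum_{y \in \mathcal{A}} \phi_{jy}\, r_y - J^*(j)\Bigr| \le \epsilon, \qquad j = 1,\dots,n.
\]
To see this, I would use the fact that both $\{\phi_{jy}\}_{y\in\mathcal{A}}$ and $\{d_{yi}\}_{i=1}^n$ are probability distributions to rewrite the left-hand side as the convex combination
\[
\sum_{y \in \mathcal{A}}\sum_{i=1}^n \phi_{jy}\, d_{yi}\, \bigl(J^*(i) - J^*(j)\bigr).
\]
Only pairs $(y,i)$ with $\phi_{jy}>0$ and $d_{yi}>0$ contribute; by \eqref{eq:critical_condition}, $d_{yi}>0$ forces $\phi_{iy}>0$, so both $i$ and $j$ aggregate with positive probability into $y$, and therefore $|J^*(i) - J^*(j)| \le \epsilon$ by the definition of $\epsilon$. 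This is the main obstacle in spirit: it is the only genuinely nonroutine move, and one can see from the formula that the bound would collapse without condition \eqref{eq:critical_condition}.

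Once this pointwise inequality is in hand, the rest is routine. Comparing the definition of $(Hr)(x)$ with the Bellman identity $J^*(i) = \min_u \sum_j p_{ij}(u)(g(i,u,j) + \alpha J^*(j))$ term by term, pulling the $\min_u$ out through $|\min_u f - \min_u g| \le \max_u |f-g|$, and applying the pointwise bound above inside the factor $\alpha \sum_j p_{ij}(u)|\sum_y \phi_{jy} r_y - J^*(j)|$, I expect $\|Hr - r\|_\infty \le \alpha \epsilon$, which together with the contraction property gives $\|r - r^*\|_\infty \le \alpha\epsilon/(1-\alpha)$. A triangle inequality then closes the argument:
\[
|J^*(i) - \tilde J(i)| \le \Bigl|J^*(i) - \sum_y \phi_{iy} r_y\Bigr| + \sum_y \phi_{iy}\,|r_y - r^*_y| \le \epsilon + \frac{\alpha\epsilon}{1-\alpha} = \frac{\epsilon}{1-\alpha},
\]
which is the claimed bound.
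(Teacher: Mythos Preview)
Your proof is correct and takes a genuinely different route from the paper's. The paper builds explicit sandwich vectors $\overline{r}_x=\min_{\{i\mid\phi_{ix}>0\}}J^*(i)+\epsilon/(1-\alpha)$ and $\underline{r}_x=\max_{\{i\mid\phi_{ix}>0\}}J^*(i)-\epsilon/(1-\alpha)$, verifies $H\overline{r}\le\overline{r}$ and $H\underline{r}\ge\underline{r}$, and then invokes the \emph{monotonicity} of $H$ (together with contraction) to conclude $\underline{r}\le r^*\le\overline{r}$; the two one-sided computations are carried out separately. You instead pick the single proxy $r_x=\sum_i d_{xi}J^*(i)$, bound the Bellman residual $\|Hr-r\|_\infty\le\alpha\epsilon$, and appeal only to the $\alpha$-contraction of $H$ to get $\|r-r^*\|_\infty\le\alpha\epsilon/(1-\alpha)$, closing with a triangle inequality through the interpolant $\sum_y\phi_{iy}r_y$. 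Your argument is shorter and symmetric (no separate upper/lower passes) and does not use monotonicity at all; the paper's argument, on the other hand, yields as a byproduct the slightly sharper pointwise information $r^*_x\le\min_{\{i\mid\phi_{ix}>0\}}J^*(i)+\epsilon/(1-\alpha)$ (and the analogous lower bound), which localizes $r^*_x$ relative to the extremes of $J^*$ over the footprint of $x$ rather than relative to a weighted average. Both routes hinge on condition~\eqref{eq:critical_condition} at exactly the same place: ensuring that whenever $d_{yi}>0$ and $\phi_{jy}>0$ one has $|J^*(i)-J^*(j)|\le\epsilon$. One small remark: the paper states only that $H$ is a sup-norm contraction without naming the modulus, but it is immediate from the form of $H$ that the modulus is $\alpha$, which your arithmetic needs.
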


\begin{proof}  
    Our line of proof follows the proof of Prop.\ 6.8 in the book by Bertsekas and Tsitsiklis \cite{bertsekas1996neuro}. Consider the operator $H$ defined by Eq.~\eqref{eq:h_op}, and the vector $\overline{r}$ with components defined by 
    $$\overline{r}_x=\min_{\{i\,|\,\phi_{ix}>0\}}J^*(i)+\frac{\epsilon}{1-\alpha},\qquad \hbox{for all }x\in{\cal A},$$
    so that 
    $$\overline{r}_x\leq J^*(i)+\frac{\epsilon}{1-\alpha},\qquad \hbox{for all $x$ and $i$ such that }\phi_{ix}>0.$$
    For all $x\in \mathcal{A}$, we have 
    \begin{align*}
        (H\overline{r})(x)=&\sum_{i=1}^nd_{xi}\min_{u\in U(i)}\sum_{j=1}^np_{ij}(u)\Bigg(g(i,u,j)+\alpha \sum_{y\in \mathcal{A}}\phi_{jy}\overline{r}_y\Bigg)\\
        =&\sum_{\{i\,|\,d_{xi}>0\}}d_{xi}\min_{u\in U(i)}\sum_{j=1}^np_{ij}(u)\Bigg(g(i,u,j)+\alpha \sum_{y\in \mathcal{A}}\phi_{jy}\overline{r}_y\Bigg)\\
        =&\sum_{\{i\,|\,d_{xi}>0\}}d_{xi}\min_{u\in U(i)}\sum_{j=1}^np_{ij}(u)\Bigg(g(i,u,j)+\alpha \sum_{\{y\,|\,\phi_{jy}>0\}}\phi_{jy}\overline{r}_y\Bigg)\\
        \leq &\sum_{\{i\,|\,d_{xi}>0\}}d_{xi}\min_{u\in U(i)}\sum_{j=1}^np_{ij}(u)\Bigg(g(i,u,j)+\alpha \sum_{\{y\,|\,\phi_{jy}>0\}}\phi_{jy}\bigg(J^*(j)+\frac{\epsilon}{1-\alpha}\bigg)\Bigg)\\
        = &\sum_{\{i\,|\,d_{xi}>0\}}d_{xi}\min_{u\in U(i)}\sum_{j=1}^np_{ij}(u)\Bigg(g(i,u,j)+\alpha J^*(j)+\frac{\alpha\epsilon}{1-\alpha}\Bigg)\\
        =&\sum_{{\{i\,|\,d_{xi}>0\}}}d_{xi}J^*(i)+\frac{\alpha\epsilon}{1-\alpha}\\
        \leq& \sum_{\{i\,|\,d_{xi}>0\}}d_{xi}\bigg(\min_{\{i'\,|\,\phi_{i'x}>0\}} J^*(i')+\epsilon\bigg)+\frac{\alpha\epsilon}{1-\alpha}\\
        =&\ \overline{r}_x,
    \end{align*}
    where the last inequality holds since for all $i$ with $d_{xi}>0$, we have $\phi_{ix}>0$ [by condition \eqref{eq:critical_condition}], so that
    $$J^*(i)\le \min_{\{i'\,|\,\phi_{i'x}>0\}} J^*(i')+\epsilon$$ 
    (by the definition of $\epsilon$). 
    
    Thus, we have $H\overline{r}\leq \overline{r}$, which implies that $r^*\leq \overline{r}$ [this is obtained by repeatedly iterating with $H$ both sides of the inequality $H\overline{r}\leq \overline{r}$, and by using the monotonicity of $H$, cf.\ Eq.\ \eqref{eq:monotone}, and the contraction property of $H$]. In view of the definition of $\overline{r}$, the inequality $r^*\leq \overline{r}$ is written as
    $$r^*_x\leq\min_{\{i\,|\,\phi_{ix}>0\}}J^*(i)+\frac{\epsilon}{1-\alpha},\qquad \hbox{for all }x\in \mathcal{A},$$
    which implies that
    $$r_x^*\leq J^*(i)+\frac{\epsilon}{1-\alpha},\qquad \hbox{for all $x$ and $i$ such that }\phi_{ix}>0.$$
    As a result, we obtain
     \begin{equation}
     \tilde J(i)=\sum_{x\in \mathcal{A}}\phi_{ix}r^*_x=\sum_{\{x\,|\,\phi_{ix}>0\}}\phi_{ix}r^*_x\leq J^*(i)+\frac{\epsilon}{1-\alpha},\qquad \hbox{for all }i=1,\ldots,n.
    \label{rightside}
\end{equation}

    The reverse direction can be shown by considering the vector $\underline{r}$ with components defined by 
    $$\underline{r}_x=\max_{\{i\,|\,\phi_{ix}>0\}}J^*(i)-\frac{\epsilon}{1-\alpha},\qquad \hbox{for all }x\in{\cal A}.$$
    Similarly, we have
    $$\underline{r}_x\geq J^*(i)-\frac{\epsilon}{1-\alpha},\qquad \hbox{for all $x$ and $i$ such that }\phi_{ix}>0.$$
    Thus,
    \begin{align*}
        (H\underline{r})(x)=&\sum_{i=1}^nd_{xi}\min_{u\in U(i)}\sum_{j=1}^np_{ij}(u)\Bigg(g(i,u,j)+\alpha \sum_{y\in \mathcal{A}}\phi_{jy}\underline{r}_y\Bigg)\\
        =&\sum_{\{i\,|\,d_{xi}>0\}}d_{xi}\min_{u\in U(i)}\sum_{j=1}^np_{ij}(u)\Bigg(g(i,u,j)+\alpha \sum_{y\in \mathcal{A}}\phi_{jy}\underline{r}_y\Bigg)\\
        =&\sum_{\{i\,|\,d_{xi}>0\}}d_{xi}\min_{u\in U(i)}\sum_{j=1}^np_{ij}(u)\Bigg(g(i,u,j)+\alpha \sum_{\{y\,|\,\phi_{jy}>0\}}\phi_{jy}\underline{r}_y\Bigg)\\
        \geq &\sum_{\{i\,|\,d_{xi}>0\}}d_{xi}\min_{u\in U(i)}\sum_{j=1}^np_{ij}(u)\Bigg(g(i,u,j)+\alpha \sum_{\{y\,|\,\phi_{jy}>0\}}\phi_{jy}\bigg(J^*(j)-\frac{\epsilon}{1-\alpha}\bigg)\Bigg)\\
        = &\sum_{\{i\,|\,d_{xi}>0\}}d_{xi}\min_{u\in U(i)}\sum_{j=1}^np_{ij}(u)\Bigg(g(i,u,j)+\alpha J^*(j)-\frac{\alpha\epsilon}{1-\alpha}\Bigg)\\
        =&\sum_{{\{i\,|\,d_{xi}>0\}}}d_{xi}J^*(i)-\frac{\alpha\epsilon}{1-\alpha}\\
        \geq& \sum_{\{i\,|\,d_{xi}>0\}}d_{xi}\bigg(\max_{\{i'\,|\,\phi_{i'x}>0\}} J^*(i')-\epsilon\bigg)-\frac{\alpha\epsilon}{1-\alpha}\\
        =&\ \underline{r}_x.
    \end{align*}
    Hence, we have $H\underline{r}\geq\underline{r}$, which implies $r^*\geq \underline{r}$. In view of the definition of $\underline{r}$, we have
    $$r^*_x\geq\max_{\{i\,|\,\phi_{ix}>0\}}J^*(i)-\frac{\epsilon}{1-\alpha},\qquad \hbox{for all }x\in \mathcal{A},$$
    which implies that
    $$r_x^*\geq J^*(i)-\frac{\epsilon}{1-\alpha},\qquad \hbox{for all $x$ and $i$ such that }\phi_{ix}>0.$$
    As a result, we obtain
   \begin{equation}
   \tilde J(i)=\sum_{x\in \mathcal{A}}\phi_{ix}r^*_x=\sum_{\{x\,|\,\phi_{ix}>0\}}\phi_{ix}r^*_x\geq J^*(i)-\frac{\epsilon}{1-\alpha},\qquad \hbox{for all }i=1,\ldots,n.
        \label{leftside}
\end{equation}
The desired bound now follows from Eqs.~\eqref{rightside} and \eqref{leftside}.
\end{proof}

We provide an example, which shows that the condition \eqref{eq:critical_condition} is essential for our bound to hold.

\begin{example}
[A Counterexample] 
\label{examplefinitehorizon}

\begin{figure}
    \centering
    \includegraphics[width=0.75\linewidth]{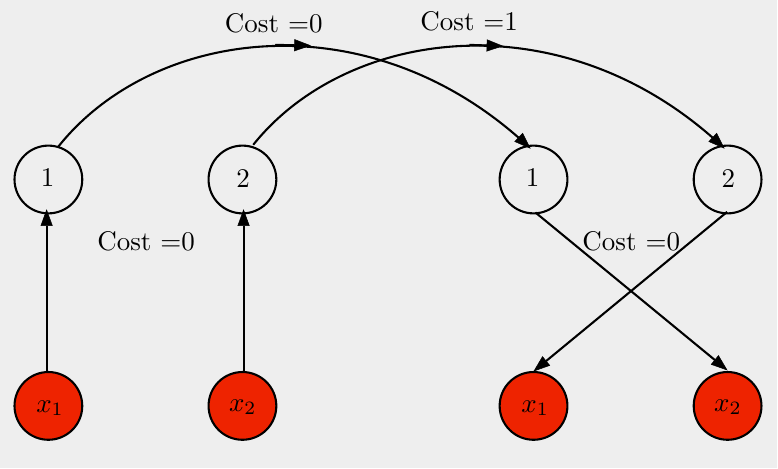}
    \caption{Illustration of the aggregate problem, and the corresponding transition mechanism and costs per stage for the problem of Example~\ref{examplefinitehorizon}. All the transitions shown have probability equal to 1.}
    \label{fig:counterexample}
\end{figure}

Consider a system involving two absorbing states, 1 and 2, i.e.,
$$p_{11}=1,\qquad p_{22}=1,\qquad p_{12}=0,\qquad p_{21}=0,$$
with self transition costs 
$$g(1,1)=0,\qquad g(2,2)=1.$$
Thus the infinite horizon costs (without aggregation) are
$$J^*(1)=0,\qquad J^*(2)=\frac{1}{1-\alpha}.$$
Assume that there are two aggregate states $x_1$ and $x_2$ that disaggregate into states 1 and 2, respectively, but aggregate states 2 and 1, respectively, i.e., 
$$d_{x_11}=1,\qquad  d_{x_22}=1,\qquad  \phi_{1x_2}=1,\qquad  \phi_{2x_1}=1;$$
see Fig.~\ref{fig:counterexample}. Then it can be seen that $\epsilon=0$ (since the sets $\{j\,|\,\phi_{jx_1}>0\}$ and $\{j\,|\,\phi_{jx_2}>0\}$ consist of a single state), but the true aggregation error is positive, i.e., the aggregation process is not exact. Indeed the sequence of generated costs starting from aggregate state $x_1$ is
$$\{0,\alpha,0,\alpha^3,0,\alpha^5,\ldots\},$$
while the sequence of generated costs starting from aggregate state $x_2$ is
$$\{1,0,\alpha^2,0,\alpha^4,0,\alpha^6,\ldots\},$$
so we have $\tilde J(i)\neq J^*(i)$ for both states $i=1,2$.

It is interesting to note that if we change the aggregation probabilities to
$$\phi_{1x_1}=\delta,\qquad  \phi_{2x_2}=\delta,\qquad  \phi_{1x_2}=1-\delta,\qquad  \phi_{2x_1}=1-\delta,$$
our bound holds for all $\delta\in(0,1]$, since the condition \eqref{eq:critical_condition} is satisfied. The bound fails to hold in the limit where $\delta=0$.
\end{example}

We note that our bound is conservative and can be quite poor, since the scalar $\epsilon$ depends only on the sets $\{j\,|\, \phi_{jx}>0\}$, $x\in\cal{A}$, and not on the actual values of $\phi_{jx}$. This can also be verified with the preceding example for $\delta>0$. An interesting question is whether the bound can be improved by proper selection of the aggregate states, based on some a priori knowledge of a good approximation to the optimal cost function $J^*$. Ideas of adaptive aggregation (Bertsekas and Castanon \cite{bertsekas1989adaptive}), and biased aggregation (Bertsekas \cite{bertsekas2019reinforcement}, Section 6.5, \cite{bertsekas2019biased}) may prove useful in this regard.

\bibliographystyle{alpha}
\bibliography{ref}
\end{document}